\numberwithin{theorem}{section}
\newcommand{\TheTitle}{On Passivity of Fractional Order Systems} 
\newcommand{\TheAuthors}{M. Rakhshan, V. Gupta, and B. Goodwine}
\headers{\TheTitle}{\TheAuthors}
\title{{\TheTitle}\thanks{Submitted to the editors DATE.
\funding{This work was funded by X}}}
\author{
  Mohsen Rakhshan\thanks{University of Notre Dame, Notre Dame, IN
    (\email{mrakhsha@nd.edu}).}
  \and
  Vijay Gupta\thanks{University of Notre Dame, Notre Dame, IN (\email{vgupta2@nd.edu}).}
  \and
  Bill Goodwine \thanks{University of Notre Dame, Notre Dame, IN (\email{billgoodwine@nd.edu}).}
}
\begin{document}
\nolinenumbers
\maketitle

\begin{abstract}
We generalize notions of passivity and dissipativity to fractional order systems. Similar to integer order systems, we show that the proposed definitions generate analogous stability and compositionality properties for fractional order systems as well. We also study problem of passivating a fractional order system through a feedback controller. Numerical examples are presented to illustrate the concepts.
\end{abstract}

\begin{keywords}
  Dissipativity, Fractional order systems, $L_2$ stability, Passivity, Stability
\end{keywords}


\section{Introduction}

Fractional order systems have been used in a wide range of applications such as electrical circuits, electromagnetism, viscoelasticity, neuroscience, and electrochemistry \cite{debnath2003recent,hilfer2000applications,kilbas2006new}.
There exists extensive research on studying dynamical properties of these systems. For instance, stability of the fractional order systems has been covered extensively (see, e.g. the survey \cite{li2011survey} and the references therein). In particular, stability of linear fractional order systems \cite{ahn2008necessary,deng2007stability,lu2009robust}, interval linear fractional order systems \cite{ahn2008necessary}, linear fractional order systems with multiple time delays \cite{deng2007stability}, and nonlinear fractional order systems \cite{Li2009,Li2010, sadati2010mittag,lenka2016asymptotic} has been studied. Robust stability using linear matrix inequalities (LMI) has also been studied in \cite{lu2009robust}.  An interesting generalization of Lyapunov stability for nonlinear fractional order systems is Mittag-Leffler stability that has been studied in works such as \cite{Li2009,Li2010, sadati2010mittag}.\par
In this paper, we are interested in extending the tools of passivity and dissipativity for fractional order systems. Passivity, and more generally dissipativity, are powerful tools which propose an energy-based method for analyzing dynamic systems~\cite{antsaklis2013control}. While initiating from concepts in electrical circuits; these tools have been extended to a wide range of applications, such as multi-agent systems \cite{chopra2006passivity}, cyber-physical systems \cite{antsaklis2013control}, chemical and thermodynamics processes \cite{bao2007process}, and aerospace structures \cite{balas2012direct}. One reason for the popularity of these tools is that they guarantee other useful properties. For instance, a passive integer order system is also stable under mild conditions and passivity is preserved under feedback and parallel interconnections \cite{hill1976stability,sepulchre2012constructive}.
 In this paper, we propose suitable notions of passivity and dissipativity for fractional order systems and show that similar properties of stability and compositionality hold for fractional order systems as well. We also consider the problem of designing a feedback controller to passivate a linear fractional order system and show that similar to linear integer order system, we can solve for the controller as a linear matrix inequality (LMI).
\par
The rest of the paper is organized as follows; In Section \ref{sec:pre}, some basic definitions and results from fractional calculus and fractional order systems are presented. In Section \ref{sec:dissipativity}, we propose passivity and dissipativity notions for fractional order systems, and study their stability and compositionality implications. Section \ref{passivationsec} solves the feedback passivation problem for fractional order systems. In Section \ref{simulation}, some examples are presented to illustrate the proposed concepts. Finally, some future research directions and conclusion are presented.
\par
\textit{Notations:} We will use the following notations. $\Re$ denotes the set of reals, $\Re_{+}$ the set of non-negative reals, $\mathcal{Z}$ the set of integers. $\Re ^{n}$ denotes the set of $n$-dimensional real vector. The Laplace transform of a function $f(t)$ is denoted by $\mathcal{L}(f(t))$. Convolution of two functions $f$ and $g$ is denoted by $f*g$. $\underline{0}$ denotes the zero vector in the space of whose dimension will be clear from the context. $||.||$ denotes the 2-norm.
\section{Preliminaries}
\label{sec:pre}	
We begin by reviewing briefly some mathematical preliminaries.
\subsection{Gamma Function}
The gamma function is the generalization of the factorial operator and is defined for any $z\in \Re$ as \cite{Loverro2004}
\begin{equation}
\Gamma (z)=\int _{0}^{\infty} e^{-u}u^{z-1} du.
\end{equation}

\subsection{The Mittag-Leffler Function}
The Mittag-Leffler Function is one of the most important functions in the fractional calculus and is defined as \cite{Loverro2004}
\begin{equation}
E_{\alpha} (z)=\sum _{k=0}^{\infty} \frac{z^{k}}{\Gamma (\alpha k+1)}, \alpha >0, z\in \Re.
\label{mitt}
\end{equation}
As we can see, for $\alpha=1$, $E_{\alpha} (z)=e^{z}$.

\subsection{The Fractional Integral}
The fractional integral of the function $f(t)$ with order $\alpha \in \Re _{+}$ is given as \cite{Loverro2004}
\begin{equation}
J_{t}^{\alpha}f(t):= f_{\alpha}(t)=\frac{1}{\Gamma (\alpha)}\int _{0}^{t} (t-\tau )^{\alpha -1}f(\tau)d\tau.
\label{fracint}
\end{equation}
It can be proven that
\begin{enumerate}
\item $J_{t}^{0} f(t)=f(t)$.
\item $J_{t}^{\alpha}J_{t}^{\beta}f(t)=J_{t}^{\alpha +\beta}f(t)=J_{t}^{\beta}J_{t}^{\alpha}f(t), \forall \alpha, \beta \in \Re_{+}$.
\end{enumerate}
\subsection{The Fractional Derivative}
We will define a fractional order derivative through the Right-Hand Definition (RHD) or the Caputo derivative, which is defined as \cite{Loverro2004}
\begin{equation}
D_{t}^{\alpha}f(t):= \frac{1}{\Gamma(m-a)}\int _{0}^{t}\frac{f^{(m)}(\tau)}{(t-\tau)^{\alpha+1-m}}d\tau ,\quad m-1<\alpha <m, \alpha \in \Re^{+}, m\in \mathcal{Z}.
\end{equation}
\par
It may be noted that
\begin{enumerate}
\item $D^{\beta}=J^{1-\beta}D^{1}, \forall \beta \in \Re _{+}$.
\item $D^{\alpha}D^{\beta}f(t)=D^{\beta}D^{\alpha}f(t),\forall \alpha, \beta \in \Re_{+}$.
\item $\mathcal{L}(D^{\beta}_{t}f(t))=s^{\beta}\mathcal{L}(f(t))-s^{\beta -1}f(0)$.
\end{enumerate}
\subsection{Fractional order systems and Mittag-Leffler Stability}
Consider the following fractional order system
\begin{equation}
\begin{aligned}
&D_{t}^{\gamma}x(t)=f(x,u,t),\\
&y=h(x,u,t)
 \end{aligned}
\label{system}
\end{equation}
where $x(t)\in \Re ^{n}$, $y(t) \in \Re ^{m}$, $y(t) \in \Re^{p}$, $\gamma \in (0,1)$ is the order of the system, and $f$ is a piecewise continuous function in $t$ and locally Lipschitz on a set $X$ containing the origin $x=\underline{0}$.\par
\begin{definition} (Free system) The system \eqref{system} evolving with $u(t)\equiv \underline{0}, \forall t\geq 0$ is called the free system of \eqref{system}.
\end{definition} 
\begin{definition}\cite{Li2010}(Equilibrium point)The free system of \eqref{system} has an equilibrium point at $x_0$, if and only if $f(x_{0},\underline{0},t)=\underline{0}$.
\end{definition}
\begin{assumption}
Without loss of generality, we will assume in the paper that the equilibrium point is at the origin.
\end{assumption}
\begin{definition}
\label{MLstability}
\cite{Li2010} 
For the free system of \eqref{system}, the equilibrium point $x=0$ is \textit{Mittag-Leffler} stable, if for all $t\geq 0$, and a $\beta \in (0,1)$, there exist positive real constants $\alpha _{1}$, $\alpha _{2}$, $\alpha _{3}$, $a$, and $b$, and a sufficiently smooth function $V(x,t): [0,\infty)\times X\rightarrow \Re$, such that 
\begin{equation}
\alpha _{1}\parallel x \parallel ^{a}\leq V(x,t) \leq \alpha _{2}\parallel x \parallel ^{ab},
\label{first}
\end{equation}
and
\begin{equation}
D_{t}^{\beta}V(x,t)\leq -\alpha _{3} \parallel x\parallel^{ab}
\label{second}
\end{equation}
\end{definition}
\begin{proposition}
\label{sta}
Let the free system of \eqref{system} satisfy
\begin{equation}
D_{t}^{\beta}V(x,t)\leq 0,
\label{third}
\end{equation}
and
\begin{equation}
\alpha _{1}\parallel x \parallel ^{a}\leq V(x,t).
\label{positive}
\end{equation}
Then, the free system of \eqref{system} is stable in the sense of Lyapunov.
\end{proposition}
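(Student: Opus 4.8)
The plan is to use the two facts about the Caputo operator recorded in Section~\ref{sec:pre} --- that $D^{\beta}=J^{1-\beta}D^{1}$ and that fractional integrals compose additively --- to convert the fractional differential inequality \eqref{third} into a pointwise comparison $V(x(t),t)\le V(x(0),0)$ along trajectories, and then to read off the $\varepsilon$--$\delta$ statement of Lyapunov stability from the coercivity estimate \eqref{positive}.

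First I would fix an initial condition $x(0)$ and the corresponding solution $x(t)$ of the free system and set $g(t):=V(x(t),t)$, a scalar and (by the smoothness hypothesis on $V$) sufficiently regular function of $t$; hypothesis \eqref{third} then reads $D_{t}^{\beta}g(t)\le 0$. Applying the fractional integral $J_{t}^{\beta}$ to both sides, and noting that $J_{t}^{\beta}$ integrates against the nonnegative kernel $(t-\tau)^{\beta-1}/\Gamma(\beta)$ and is therefore order-preserving, gives $J_{t}^{\beta}D_{t}^{\beta}g(t)\le 0$. On the other hand, writing $D^{\beta}=J^{1-\beta}D^{1}$ and using $J^{\beta}J^{1-\beta}=J^{1}$ together with $J^{1}h(t)=\int_{0}^{t}h(\tau)\,d\tau$, one obtains $J_{t}^{\beta}D_{t}^{\beta}g(t)=J_{t}^{1}g'(t)=g(t)-g(0)$. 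Combining the two displays yields $V(x(t),t)\le V(x(0),0)$ for all $t\ge 0$; i.e., $V$ is nonincreasing along trajectories.

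With this monotonicity in hand, \eqref{positive} yields $\alpha_{1}\|x(t)\|^{a}\le V(x(t),t)\le V(x(0),0)$, so $\|x(t)\|\le\big(V(x(0),0)/\alpha_{1}\big)^{1/a}$ for every $t\ge 0$. Given $\varepsilon>0$, continuity of $V(\cdot,0)$ at the origin with $V(0,0)=0$ (inherited, e.g., from the upper bound in Definition~\ref{MLstability}) lets me choose $\delta>0$ so that $\|x(0)\|<\delta$ implies $V(x(0),0)<\alpha_{1}\varepsilon^{a}$, and therefore $\|x(t)\|<\varepsilon$ for all $t\ge 0$. That is precisely stability in the sense of Lyapunov.

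I expect the main obstacle to be the rigorous justification of the identity $J_{t}^{\beta}D_{t}^{\beta}g(t)=g(t)-g(0)$ and of the order-preserving property used above: one needs the trajectory $t\mapsto x(t)$ (hence $g$) regular enough for the Caputo derivative to exist classically and for the interchange/composition of the fractional operators to be legitimate, and the inequality \eqref{third} must be understood along solutions rather than in a weak sense. A secondary point worth flagging is that \eqref{positive} and \eqref{third} by themselves do not bound $V(x(0),0)$ in terms of $\|x(0)\|$; some decrescence or continuity of $V$ at the origin has to be assumed for the final $\varepsilon$--$\delta$ step, consistent with the hypotheses carried over from Definition~\ref{MLstability}.
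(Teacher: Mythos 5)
Your proof is correct and arrives at the same pivotal inequality as the paper, namely $V(x(t),t)\le V(x(0),0)$, but by a different mechanism. The paper introduces an auxiliary nonnegative function $M(t)$ with $D_{t}^{\beta}V+M=0$, passes to the Laplace domain, and inverts to obtain \eqref{forth}, concluding $V(x,t)\le V(x(0),0)$ because the convolution $M(t)\ast t^{\beta-1}/\Gamma(\beta)$ is nonnegative; you instead apply $J_{t}^{\beta}$ directly to the inequality $D_{t}^{\beta}V\le 0$, using order-preservation of the fractional integral and the composition identities $D^{\beta}=J^{1-\beta}D^{1}$, $J^{\beta}J^{1-\beta}=J^{1}$ from the preliminaries. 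These are really the same computation (the paper's convolution term is exactly $J_{t}^{\beta}M$), so your time-domain version is a cleaner, transform-free rendering of the same idea, and it makes explicit the regularity needed for $J^{\beta}D^{\beta}g=g(t)-g(0)$ to hold, which the Laplace argument quietly assumes. Where you genuinely improve on the paper is the endgame: the paper's final step asserts that \eqref{positive} and \eqref{stable} let one write $V(x(0),0)=\alpha_{n}\|x(0)\|^{n}$, which does not follow from the stated hypotheses; your $\varepsilon$--$\delta$ argument correctly identifies that some decrescence or continuity of $V(\cdot,0)$ at the origin with $V(0,0)=0$ must be assumed (as in the upper bound of Definition~\ref{MLstability}) to convert the bound $\|x(t)\|\le(V(x(0),0)/\alpha_{1})^{1/a}$ into Lyapunov stability, and you flag that this is an additional hypothesis rather than a consequence of \eqref{third} and \eqref{positive} alone.
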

\begin{proof}
According to \eqref{third}, there exists a non-negative function $M(t)$, such that
\begin{equation}
D_{t}^{\beta}V(x,t)+M(t)= 0.
\end{equation}
Taking the Laplace transform yields
\begin{equation}
s^{\beta}V(s)-s^{\beta -1}V(0)+M(s)=0,
\end{equation}
where $V(0) := V(x(0),0)$, and $V(s) := \mathcal{L} \lbrace V(x,t) \rbrace$. Equivalently,
\begin{equation}
V(s)=\frac{V(0)}{s}-\frac{M(s)}{s^{\beta}}.
\end{equation}
Performing the inverse Laplace transform yields
\begin{equation}
V(x,t)=V(x(0),0)-M(t)\ast \frac{t^{\beta -1}}{\Gamma (\beta)}.
\label{forth}
\end{equation}
Since $M(t)$ and $\frac{t^{\beta -1}}{\Gamma (\beta)}$ are positive functions for all $t$, and $\beta$, \eqref{forth} implies
\begin{equation}
V(x,t)<V(x(0),0).
\label{fifth}
\end{equation}
From (\ref{fifth}) and (\ref{positive}), we can infer that
\begin{equation}
\alpha _{1}\parallel x \parallel ^{a}<V(x(0),0).
\end{equation}
Equivalently, we can write
\begin{equation}
\parallel x \parallel <\left(\frac{V(x(0),0)}{\alpha _{1}}\right)^{\frac{1}{a}}.
\label{stable}
\end{equation}
Equation (\ref{stable}) shows that origin is a stable equilibrium point for the free system of~(\ref{system}). Furthermore, we can assume that there exists an $\epsilon$, such that
\begin{equation}
\parallel x \parallel <\left(\frac{V(0)}{\alpha _{1}}\right)^{\frac{1}{a}}< \epsilon.
\label{sta1}
\end{equation}
Then, according to (\ref{positive}) and \eqref{stable}, there exist positive constants $\alpha _{n}$ and $n$ such that $V(x(0),0)$ can be written as
\begin{equation*}
V(x(0),0)=\alpha _{n}\parallel x(0) \parallel ^{n},
\end{equation*}
which after some simplification, yields
\begin{equation}
\parallel x(0) \parallel < \left(\frac{\alpha _{1}}{\alpha _{n}} \epsilon ^{a}\right)^{\frac{1}{n}}=\theta.
\end{equation}
Therefore, the system (\ref{system}) is stable in the sense of Lyapunov.
\end{proof}
\begin{remark}
Any fractional order system with an order of derivative of more than $1$ can be transformed into an augmented system with the derivative order of less than $1$. To do so, consider the system in \eqref{system} where $\beta >1$. Without loss of generality, assume that $\frac{\beta}{2}<1$. Then, we can rewrite the system as
\begin{equation}
D_{t}^{\frac{\beta}{2}}(D_{t}^{\frac{\beta}{2}}x(t))=f(x,u,t).
\label{systemnew}
\end{equation}
Then, defining
\begin{equation}
D_{t}^{\frac{\beta}{2}}x(t)=z(t),
\end{equation}
yields the following augmented system
\begin{equation}
D_{t}^{\frac{\beta}{2}}\begin{bmatrix}
x(t)^{T} &z(t)^{T}
\end{bmatrix}^{T}=\begin{bmatrix}
f(x,u,t)^{T}&x(t)^{T}
\end{bmatrix}^{T}.
\end{equation}
\end{remark}
\section{Dissipativity of fractional order systems}
\label{sec:dissipativity}
\begin{definition}(Dissipativity)
The system \eqref{system} is dissipative with respect to the supply rate $w(x,u,y)$ and with order of $\beta \in (0,1)$, if there exists a positive semi-definite storage function $V(x,t)$ such that
\begin{equation}
D_{t}^{\beta}V(x,t)\leq w(x,u,y), \quad \forall t\geq 0.
\label{diss}
\end{equation}
\end{definition}
\begin{definition}(QSR dissipativity)
The system \eqref{system} is QSR dissipative with order of $\beta$, if for the supply rate
\begin{equation}
w(u,y)=-y^{T}Qy+y^{T}Su+u^{T}Ru
\label{QSR}
\end{equation}
where $Q$, $S$, and $R$ are matrices with proper dimensions, the dissipativity inequality~\eqref{diss} holds for a $\beta \in (0,1)$.
\end{definition}
\begin{theorem}
(Dissipativity and $L_2$ stability) Assume that the system \eqref{system} is QSR dissipative with $Q>0$ and a $\beta \in (0,1)$. Then, the system is $L_2$ stable.
\end{theorem}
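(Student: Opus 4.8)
The plan is to mirror the classical integer-order proof that a QSR-dissipative system with $Q>0$ has finite $L_2$ gain, substituting the correct fractional bookkeeping for the ordinary time integration of the storage function. Beginning from the dissipation inequality $D_t^\beta V(x,t)\le -y^TQy+y^TSu+u^TRu$, I would \emph{integrate} it by applying $\int_0^t(\cdot)\,d\tau$ to both sides. The left-hand side is handled with the identities already in the preliminaries: $D_t^\beta=J_t^{1-\beta}D_t^1$, the semigroup property $J_t^a J_t^b=J_t^{a+b}$ (so $J_t^1 J_t^{1-\beta}=J_t^{2-\beta}=J_t^{1-\beta}J_t^1$), the fundamental theorem of calculus $\int_0^t D_\tau^1 V\,d\tau=V(x,t)-V(x(0),0)$, and $J_t^{1-\beta}(1)=t^{1-\beta}/\Gamma(2-\beta)$. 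Combined, these give
\[
\int_0^t D_\tau^\beta V(x,\tau)\,d\tau \;=\; J_t^{1-\beta}V(x,t)\;-\;\frac{V(x(0),0)}{\Gamma(2-\beta)}\,t^{1-\beta}.
\]
Since the storage function is positive semi-definite, $V\ge 0$, and for $\beta\in(0,1)$ the kernel of $J_t^{1-\beta}$ and $\Gamma(1-\beta)$ are positive, so $J_t^{1-\beta}V(x,t)\ge 0$ and may be discarded; together with $\int_0^t D_\tau^\beta V\,d\tau\le\int_0^t w\,d\tau$ this yields $\int_0^t w(u,y)\,d\tau \ge -\frac{V(x(0),0)}{\Gamma(2-\beta)}t^{1-\beta}$.

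Next I would insert the QSR supply rate and use $Q>0$: with $\lambda:=\lambda_{\min}(Q)>0$ one has the pointwise bounds $-y^TQy\le-\lambda\|y\|^2$, $y^TSu\le\|S\|\,\|y\|\,\|u\|\le\frac{\lambda}{2}\|y\|^2+\frac{\|S\|^2}{2\lambda}\|u\|^2$ (Young's inequality), and $u^TRu\le\|R\|\,\|u\|^2$, hence $w(u,y)\le -\frac{\lambda}{2}\|y\|^2+\big(\frac{\|S\|^2}{2\lambda}+\|R\|\big)\|u\|^2$. Integrating this and combining with the previous step gives
\[
\frac{\lambda}{2}\int_0^t\|y(\tau)\|^2\,d\tau \;\le\; \frac{V(x(0),0)}{\Gamma(2-\beta)}\,t^{1-\beta}\;+\;\Big(\frac{\|S\|^2}{2\lambda}+\|R\|\Big)\int_0^t\|u(\tau)\|^2\,d\tau.
\]
For the input--output statement I would take the zero initial state $x(0)=\underline{0}$, so that $V(x(0),0)=0$; the residual term then vanishes and, letting $t\to\infty$, one obtains $\|y\|_{L_2}\le\gamma\,\|u\|_{L_2}$ with $\gamma^2=\frac{\|S\|^2}{\lambda^2}+\frac{2\|R\|}{\lambda}$, which is finite-gain $L_2$ stability. (Equivalently, one may keep the fractional integral $J_t^\beta$ throughout, using $J_t^\beta D_t^\beta V=V-V(x(0),0)$ and the monotonicity of $J_t^\beta$, and pass to ordinary integrals only at the end via $J_t^{1-\beta}J_t^\beta=J_t^1$.)

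The QSR estimate is routine; the delicate part is the first step. Unlike the integer-order case, integrating $D_t^\beta V\le w$ does not reproduce $V(x,t)-V(x(0),0)\le\int_0^t w$: the term that survives on the left is $J_t^{1-\beta}V(x,t)\ge 0$, and the constant bias $V(x(0),0)$ is replaced by the time-growing quantity $\frac{V(x(0),0)}{\Gamma(2-\beta)}t^{1-\beta}$. The two things to get right are that positive semi-definiteness of $V$ still permits discarding $J_t^{1-\beta}V(x,t)$, and that --- because the leftover bias now grows in $t$ --- a genuine bias-free $L_2$ gain requires the standard zero-initial-state assumption to eliminate it; this is exactly where the fractional case diverges from the integer one, and it reduces to the classical statement as $\beta\to1$, since then $t^{1-\beta}\to1$ and $\Gamma(2-\beta)\to1$. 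Throughout I would assume only enough regularity on $V$ (smooth enough in $t$ for $D_t^\beta V=J_t^{1-\beta}\dot V$ and for the interchange of integrations), consistent with the \emph{sufficiently smooth} storage functions used elsewhere in the paper.
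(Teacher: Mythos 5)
Your proof is correct and follows the same overall architecture as the paper's: decompose $D_t^\beta=J_t^{1-\beta}D_t^1$, apply the ordinary integral, use positive semi-definiteness of the storage function to discard $J_t^{1-\beta}V(x,t)$, and bound the cross term $y^TSu$ to isolate $-\|y\|^2$ and $+\|u\|^2$ contributions (your Young's inequality is algebraically the same as the paper's completing-the-square). You diverge at the two delicate points, and in both cases your version is the sounder one. First, the paper sets $q=\|Q\|$ and writes $-y^TQy\le -q\|y\|^2$, which is false in general since $\|Q\|=\lambda_{\max}(Q)$; your use of $\lambda=\lambda_{\min}(Q)>0$ is the correct bound, and it is exactly where the hypothesis $Q>0$ is actually needed. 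Second, to eliminate the initial-storage bias $\frac{t^{1-\beta}}{\Gamma(2-\beta)}V(x(0),0)$, the paper asserts that its inequality holds "for all $t\in[0,\tau]$" and evaluates at $t=0$ --- a step that is hard to justify, since the $t$ appearing there is not a free parameter but the variable of the composed operator $J_t^{1-\beta}J_\tau^1$. You instead impose the standard zero-initial-state (or $V(x(0),0)=0$) convention for finite-gain $L_2$ stability, which is the honest way to remove the term and correctly flags that, unlike the integer-order case, the leftover bias grows like $t^{1-\beta}$ rather than staying constant. The cost is that your statement is conditioned on zero initial state; the benefit is that every step is justified.
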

\begin{proof}
Define $q=\parallel Q \parallel \geq 0$, $r=\parallel R \parallel \geq 0$, and $s=\parallel S \parallel > 0$. Then, according to \eqref{diss} and \eqref{QSR}, we obtain
\begin{equation}
D_{t}^{\beta}V(x,t)\leq -q \parallel y \parallel ^{2} + r \parallel u \parallel ^{2}+s \parallel u \parallel  \parallel y \parallel 
\label{near24}
\end{equation}
Then, adding and subtracting $\frac{s^2}{2q}\parallel u \parallel^{2}$ to the right side of \eqref{near24} yields 
\begin{equation}
D_{t}^{\beta}V(x,t)\leq -\frac{1}{2q}(q\parallel y \parallel -s\parallel u \parallel )^{2}+(\frac{s^2}{2q}+r)\parallel u \parallel ^{2}-\frac{q}{2}\parallel y \parallel ^{2}.
\label{l2mid}
\end{equation}
Therefore, we can rewrite \eqref{l2mid} as
\begin{equation}
D_{t}^{\beta}V(x,t)\leq (\frac{s^2}{2q}+r)\parallel u \parallel ^{2}-\frac{q}{2}\parallel y \parallel ^{2}.
\label{secl2}
\end{equation}
Equivalently, we can write
\begin{equation}
J_{t}^{1-\beta}D_{t}^{1}V(x,t)\leq (\frac{s^2}{2q}+r)\parallel u \parallel ^{2}-\frac{q}{2}\parallel y \parallel ^{2}.
\end{equation}
Taking the integer order normal integral yields
\begin{equation}
J_{t}^{1-\beta}J_{\tau}^{1}D_{t}^{1}V(x,t)\leq (\frac{s^2}{2q}+r)\parallel u_{\tau} \parallel _{L_{2}}^{2}-\frac{q}{2}\parallel y_{\tau} \parallel _{L_{2}}^{2}.
\label{secl2one}
\end{equation}
where $u_{\tau}$, and $y_{\tau}$ are the truncated version of $u$, and $y$ until time $\tau$.
Then, we rewrite \eqref{secl2one} as
\begin{equation}
J_{t}^{1-\beta}(V(x,\tau)-V(x,0))\leq (\frac{s^2}{2q}+r)\parallel u_{\tau} \parallel _{L_{2}}^{2}-\frac{q}{2}\parallel y_{\tau} \parallel _{L_2}^{2}.
\label{secl2two}
\end{equation}
Then, since $V(x,t)\geq 0, \forall t$, the fractional order integral of $V(x,\tau)$ is also positive. Furthermore, according to the definition of the fractional order integral \eqref{fracint}, we can derive
 \begin{equation}
\frac{q}{2}\parallel y_{\tau} \parallel _{L_2}^{2} \leq (\frac{s^2}{2q}+r)\parallel u_{\tau} \parallel _{L_{2}}^{2}+\frac{t ^{1-\beta}}{(1-\beta)\Gamma (1-\beta)}V(x,0), \quad \forall t \in [0,\tau].
\label{secl2three}
\end{equation}
Since for every $\tau$ \eqref{secl2three} should hold for all $t \in [0,\tau]$, it should also hold for $t=0$, i.e.,
 \begin{equation}
\frac{q}{2}\parallel y_{\tau} \parallel _{L_2}^{2} \leq (\frac{s^2}{2q}+r)\parallel u_{\tau} \parallel _{L_{2}}^{2}, \quad \forall \tau\geq 0,
\label{secl2four}
\end{equation}
which shows the system is $L_2$ stable.
\end{proof}
\begin{remark}
As $\beta \rightarrow 1$, since $D_{t}^{\beta} \rightarrow \frac{d}{dt}$, the fractional order system converges to an integer order system. In this case,
\begin{equation*}
\lim _{\beta \rightarrow 1}\frac{t^{1-\beta}}{(1-\beta)\Gamma(1-\beta)}=1
\end{equation*}
Therefore, one can rewrite \eqref{secl2three} as
 \begin{equation}
\frac{q}{2}\parallel y_{\tau} \parallel _{L_2}^{2} \leq (\frac{s^2}{2q}+r)\parallel u_{\tau} \parallel _{L_{2}}^{2}+V(x,0), \quad \forall t \in [0,\tau],
\end{equation}
which recovers the result that QSR dissipativity of integer order systems implies $L_2$ stability.
\end{remark}
\begin{definition} (Passivity of fractional order systems)
The system (\ref{system}) is called passive, if there exists a positive semi-definite storage function function $V(x,t)$, such that (\ref{positive}) is dissipative with order of $\beta \in (0,1)$ with respect to the supply rate $w(u,y)=u^{T}y$, i.e.,
\begin{equation}
u^{T}y \geq  D_{t}^{\beta}V(x,t).
\label{passivity}
\end{equation}
\end{definition}

\begin{theorem} (Passivity and Lyapunov stability)
If the system (\ref{system}) is passive with a positive semi-definite storage function $V(x,t)$, then $D_{t}^{\gamma}x(t)=f(x,0,t)$ is stable in the sense of Lyapunov with origin as the equilibrium point.\par
\end{theorem}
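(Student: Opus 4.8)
The plan is to reduce the claim to Proposition~\ref{sta} by specializing the passivity inequality to the free system. First I would set $u(t)\equiv\underline{0}$ in \eqref{passivity}; since the supply rate is $w(u,y)=u^{T}y$, the right-hand side vanishes and we immediately obtain
\begin{equation*}
D_{t}^{\beta}V(x,t)\le 0,
\end{equation*}
which is exactly hypothesis \eqref{third} of Proposition~\ref{sta}, now read for the free dynamics $D_{t}^{\gamma}x(t)=f(x,\underline{0},t)$. Note that the dissipativity order $\beta\in(0,1)$ need not coincide with the system order $\gamma$; Proposition~\ref{sta} is stated for an arbitrary such $\beta$, so this mismatch is harmless.

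Next I would supply the remaining hypothesis of Proposition~\ref{sta}, namely the lower bound \eqref{positive}, $\alpha_{1}\|x\|^{a}\le V(x,t)$. Since a storage function is, by the standing convention, positive semi-definite and vanishes at the equilibrium, together with the assumption that $V$ is sufficiently smooth one extracts $a$ and $\alpha_{1}$ from a local class-$\mathcal{K}$ minorant of $V$; equivalently, one invokes the usual hypothesis (as in the passivity--stability theorem for integer order systems) that the storage function is positive definite. With \eqref{third} and \eqref{positive} both in place, Proposition~\ref{sta} yields directly that the origin is a stable equilibrium in the sense of Lyapunov for $D_{t}^{\gamma}x(t)=f(x,\underline{0},t)$, which is the assertion.

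The main obstacle is precisely this second step: passivity as stated furnishes only a positive semi-definite $V$, and positive semi-definiteness alone does not give the strict lower bound \eqref{positive} that Proposition~\ref{sta} demands. I expect the argument to require either the explicit additional hypothesis that $V(x,t)$ is positive definite (bounded below by a class-$\mathcal{K}$ function of $\|x\|$, uniformly in $t$), or a detectability-type condition tying zero output to decay of the state, mirroring the zero-state detectability assumption used in the integer order case. Everything else --- substituting $u\equiv\underline{0}$ into \eqref{passivity} and citing Proposition~\ref{sta} --- is routine.
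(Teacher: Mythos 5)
Your proof takes exactly the same route as the paper: set $u\equiv\underline{0}$ in \eqref{passivity} to obtain \eqref{third} and invoke Proposition~\ref{sta}. The extra hypothesis you flag as a potential gap is in fact (intended to be) built into the paper's definition of passivity, which---despite its garbled phrasing---requires the storage function to satisfy \eqref{positive}, so no additional positive-definiteness or detectability assumption is needed beyond what that definition already asserts; your observation that mere positive semi-definiteness would not suffice for Proposition~\ref{sta} is nonetheless a fair criticism of how the definition is worded.
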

\begin{proof}
Setting $u=0$ in (\ref{passivity}) results in the equation (\ref{third}), which is proven in \textit{Proposition} \ref{sta} to yield the conclusion that the system is stable in the sense of Lyapunov.
\end{proof}
\begin{definition}(State strictly passive)
The system (\ref{system}) is called  state strictly passive, if there exists a function $V(x,t)$ such that (\ref{positive}) is satisfied, and further for some positive definite functions $\psi (x)$
\begin{equation}
u^{T}y \geq D_{t}^{\beta}V(x,t)+\psi (x).
\label{strictpassivity}
\end{equation}
\end{definition}
\begin{theorem}
If the system (\ref{system}) is strictly passive with a positive storage function $V(x,t)$ satisfying (\ref{first}), and $\psi (x)=\alpha _{3} \parallel x\parallel^{ab}$, then the origin of $D_{t}^{\gamma}x(t)=f(x,0,t)$ is \textit{Mittag-Leffler} stable.\par
\end{theorem}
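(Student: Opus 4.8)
The plan is to reduce the claim to a direct verification of the two defining inequalities of Mittag-Leffler stability in Definition \ref{MLstability}. First I would specialize the strict passivity inequality (\ref{strictpassivity}) to the free dynamics by setting $u(t)\equiv\underline{0}$, $\forall t\ge 0$. Since then $u^{T}y=0$, inequality (\ref{strictpassivity}) collapses to $D_{t}^{\beta}V(x,t)\le -\psi(x)$, and with the hypothesis $\psi(x)=\alpha_{3}\|x\|^{ab}$ this is precisely (\ref{second}) with exponent $ab$ and constant $\alpha_{3}$.

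Second, the theorem already assumes that the same storage function $V$ obeys the two-sided bound (\ref{first}), i.e. $\alpha_{1}\|x\|^{a}\le V(x,t)\le\alpha_{2}\|x\|^{ab}$, the strict passivity definition guarantees $\beta\in(0,1)$, and $V$ is sufficiently smooth by hypothesis. Hence all the requirements of Definition \ref{MLstability} are met for the free system $D_{t}^{\gamma}x(t)=f(x,0,t)$ with the same constants $\alpha_{1},\alpha_{2},\alpha_{3},a,b$ and the same $\beta$, and we conclude that the origin is Mittag-Leffler stable. The one point to be careful about is that the supply-rate inequality is assumed to hold for all $t\ge 0$, so the derived differential inequality on $D_{t}^{\beta}V$ holds on the entire time axis, exactly as Definition \ref{MLstability} requires; this is what makes the specialization legitimate rather than merely pointwise.

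If desired, one may additionally record the quantitative consequence: applying the Laplace-transform argument of the proof of Proposition \ref{sta} to $D_{t}^{\beta}V(x,t)+M(t)=0$ with $M(t)\ge\alpha_{3}\|x\|^{ab}\ge(\alpha_{3}/\alpha_{2})V(x,t)$, together with the fractional comparison principle of \cite{Li2010}, yields an estimate of the form $V(x,t)\le V(x(0),0)E_{\beta}\!\left(-\tfrac{\alpha_{3}}{\alpha_{2}}t^{\beta}\right)$ and hence $\|x(t)\|\le\big(\tfrac{V(x(0),0)}{\alpha_{1}}E_{\beta}(-\tfrac{\alpha_{3}}{\alpha_{2}}t^{\beta})\big)^{1/a}$, which exhibits the decay explicitly; but this refinement is not needed for the statement as phrased. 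There is therefore no substantive obstacle: the content of the theorem is the observation that the $u=0$ restriction of state strict passivity is exactly the Lyapunov-type condition defining Mittag-Leffler stability, and the proof is a one-line specialization plus an appeal to Definition \ref{MLstability}.
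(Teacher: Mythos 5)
Your proposal is correct and follows essentially the same route as the paper: set $u=\underline{0}$ in (\ref{strictpassivity}) so that $\psi(x)=\alpha_{3}\|x\|^{ab}$ yields (\ref{second}), and combine this with the assumed bound (\ref{first}) to invoke Definition \ref{MLstability} directly. The additional Mittag-Leffler decay estimate you sketch is a nice optional refinement but, as you note, is not needed given how the paper defines Mittag-Leffler stability.
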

\begin{proof}
Setting $u=\underline{0}$ in (\ref{strictpassivity}) results in (\ref{second}), which according to \textit{Definition} \ref{MLstability}, it satisfies the \textit{Mittag-Leffler} stablity.
\end{proof}
\begin{theorem}
Assume two systems $S_1$ and $S_2$ with the following dynamics
\begin{equation*}
\begin{aligned}
{S_{1}:} &D_{t}^{\gamma _{1}}x_{1}(t)=f_{1}(x_{1},u_{1},t),\\& y_{1}=h_{1}(x_{1},u_{1},t),
\end{aligned}
\end{equation*}
and,
\begin{equation*}
\begin{aligned}
{S_{2}:} &D_{t}^{\gamma _{2}}x_{2}(t)=f_{2}(x_{2},u_{2},t),\\& y_{2}=h_{2}(x_{2},u_{2},t).
\end{aligned}
\end{equation*}
If both systems are passive with the same order of $\beta$, then the parallel and feedback interconnections of $S_1$, and $S_2$ are also passive with the order of $\beta$.
\end{theorem}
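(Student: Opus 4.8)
The plan is to handle the two interconnections separately, but in both cases to take the sum $V(x,t) := V_1(x_1,t) + V_2(x_2,t)$ as the candidate storage function for the composite system, use the linearity of the Caputo derivative to split $D_t^\beta V = D_t^\beta V_1 + D_t^\beta V_2$, apply the individual passivity inequalities \eqref{passivity} to each term, and then verify by a short algebraic manipulation that, under the interconnection's input/output convention, the resulting right-hand side is exactly the inner product of the composite input with the composite output. Linearity of $D_t^\beta$ is immediate from its integral definition, and a sum of positive semi-definite functions is positive semi-definite, so $V$ is a legitimate storage function in the sense of \eqref{positive}.

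For the parallel interconnection the shared input is $u := u_1 = u_2$ and the composite output is $y := y_1 + y_2$. Then $D_t^\beta V = D_t^\beta V_1 + D_t^\beta V_2 \le u_1^T y_1 + u_2^T y_2 = u^T(y_1 + y_2) = u^T y$, which is exactly the passivity inequality \eqref{passivity} for the parallel system with storage function $V$ and order $\beta$.

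For the (negative) feedback interconnection I would introduce external inputs $e_1, e_2$ and set $u_1 = e_1 - y_2$, $u_2 = e_2 + y_1$, with composite input $e := (e_1^T, e_2^T)^T$ and composite output $y := (y_1^T, y_2^T)^T$. Adding the two dissipation inequalities gives $D_t^\beta V \le u_1^T y_1 + u_2^T y_2 = e_1^T y_1 + e_2^T y_2 + (y_1^T y_2 - y_2^T y_1)$; since $y_1^T y_2$ and $y_2^T y_1$ are equal scalars, the cross terms cancel, leaving $D_t^\beta V \le e^T y$. Hence the feedback interconnection is passive with the same order $\beta$ and storage function $V$.

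These computations are routine once linearity is in hand, so there is no serious obstacle; the only points requiring care are (i) checking that the input and output dimensions of $S_1$ and $S_2$ are compatible, so that the interconnection (and therefore the products $u^T y$ and $e^T y$) is well defined, and (ii) observing that, although the composite state $(x_1,x_2)$ evolves under two possibly distinct orders $\gamma_1,\gamma_2$, the notion of passivity used here constrains only the $\beta$-order dissipation inequality \eqref{passivity}, so the mismatch in state-dynamics orders does not obstruct the argument. If in addition one wants the lower bound \eqref{positive} for the composite storage function, it follows from $\|(x_1,x_2)\| \ge \max(\|x_1\|,\|x_2\|)$ together with the corresponding bounds for $V_1$ and $V_2$.
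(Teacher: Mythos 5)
Your proof is correct and follows essentially the same route as the paper: sum the storage functions, use linearity of $D_t^{\beta}$, add the two passivity inequalities, and cancel the cross terms in the feedback case. The only cosmetic difference is that you allow two exogenous inputs $e_1,e_2$ in the feedback loop while the paper takes the single reference $r$ (i.e.\ $e_2=0$) with closed-loop output $y=y_1$; your version specializes to theirs.
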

\begin{proof}
\textit{Parallel interconnection:} Consider the parallel interconnection in Fig. \ref{parallel}. Since both systems are passive with the same order of $\beta$, there exist two functions $V_{1}$ and $V_{2}$ such that the passivity equation \eqref{passivity} holds for both systems. i.e.,
\begin{equation}
u_{1}^{T}y_{1} \geq  D_{t}^{\beta}V_{1}(t,x_{1}),
\label{sys1}
\end{equation}
and,
\begin{equation}
u_{2}^{T}y_{2} \geq  D_{t}^{\beta}V_{2}(t,x_{2}).
\label{sys2}
\end{equation}
Since in the parallel interconnection $u=u_{1}=u_{2}$, and $y=y_{1}+y_{2}$, adding \eqref{sys1}, and \eqref{sys2} yields
\begin{equation}
u^{T}y_{1}+u^{T}y_{2}\geq D_{t}^{\beta}(V_{1}(t,x_{1})+V_{2}(t,x_{2})).
\end{equation}
Then, defining $x=\begin{bmatrix}
x_{1}^{T} & x_{2}^{T}
\end{bmatrix} ^{T}$, and $V(x,t)=V_{1}(x_{1},t)+V_{2}(x_{2},t)$ yields
\begin{equation}
u^{T}y \geq D_{t}^{\beta}V(x,t),
\end{equation}
which shows that the parallel interconnected system is also passive with the order of $\beta$.

\begin{figure}[htbp]
  \centering
  \includegraphics[scale=0.4]{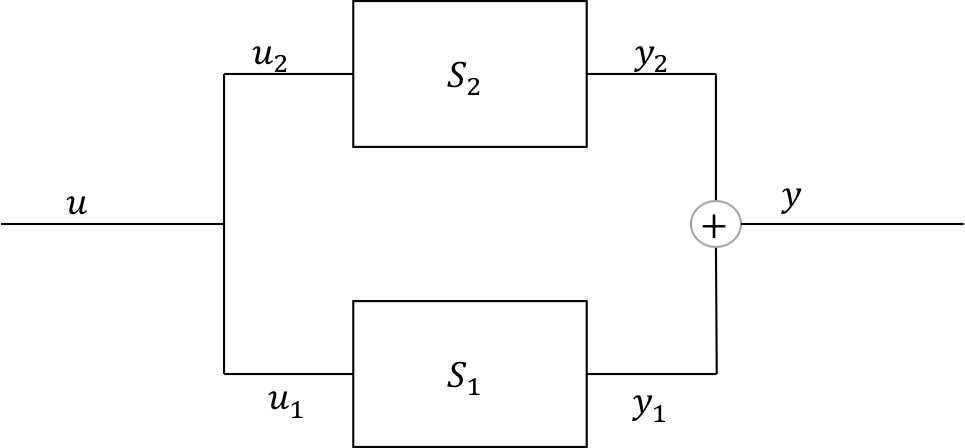}
  \caption{Parallel interconnection of two passive systems}
  \label{parallel}
\end{figure}
\par
\textit{Feedback interconnection:} Consider the feedback interconnection in Fig. \ref{feedback}. In this case $u_{2}=y_{1}$, and $u_{1}=r-y_{2}$. Therefore, adding \eqref{sys1}, and \eqref{sys2} yields
\begin{equation}
u_{1}^{T}y_{1}+u_{2}^{T}y_{2} =(r-y_{2})^{T}y_{1}+y_{1}^{T}y_{2}\geq D_{t}^{\beta}(V_{1}(x_{1},t)+V_{2}(x_{2},t)),
\end{equation}
Then, with defining $x=\begin{bmatrix}
x_{1}^{T} & x_{2}^{T}
\end{bmatrix} ^{T}$, and $V(x,t)=V_{1}(x_{1},t)+V_{2}(x_{2},t)$, we derive
\begin{equation}
r^{T}y\geq D_{t}^{\beta}V(x,t),
\end{equation}
which shows that the closed-loop system is also passive.
\end{proof}
\begin{figure}[htbp]
  \centering
  \includegraphics[scale=0.4]{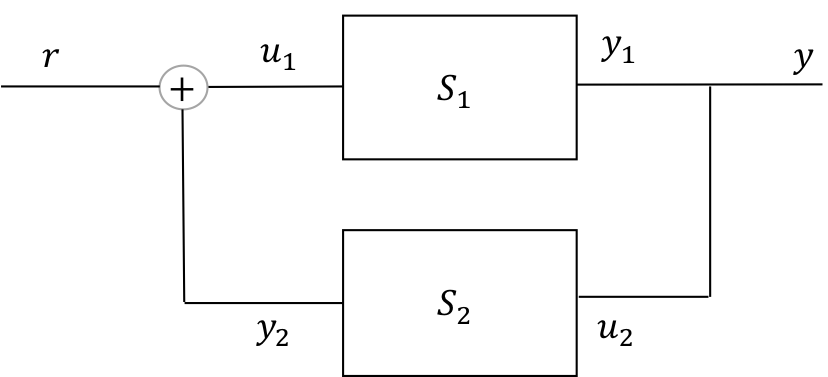}
  \caption{Feedback interconnection of two passive systems}
  \label{feedback}
\end{figure}
\par
The aforementioned theorem can be used when both systems are passive with the same order of $\beta$. In the following, we study the passivity of the interconnected systems with different orders of passivity. To do so, we need to use an equivalent definition for passivity which is described in the following proposition.
\par
\begin{proposition}
The system \eqref{system} is passive, if for any $\tau \geq 0$, the following equation holds
\begin{equation}
J_{\tau}^{1} u^{T}y \geq 0.
\label{newpassivity}
\end{equation}
\end{proposition}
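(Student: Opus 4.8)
The plan is to read this proposition as a genuine equivalence: the stated ``if'' is the implication needed to return from the integral condition \eqref{newpassivity} to a storage-function certificate, but since it is advertised as an \emph{equivalent} definition I would also record the converse, and I would prove that converse first because it is short and because it dictates the construction used in the hard direction. Two conventions should be made explicit up front: the inequality $J_{\tau}^{1}u^{T}y\ge 0$ is to hold along \emph{every} admissible input, and $x(0)=\underline{0}$, which is natural once the equilibrium is the origin and $V$ is positive semi-definite with $V(\underline{0},0)=0$.

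For the converse (passivity $\Rightarrow$ \eqref{newpassivity}): start from \eqref{passivity}, use the identity $D_{t}^{\beta}V=J_{t}^{1-\beta}D_{t}^{1}V$ recorded in the preliminaries, and apply the ordinary integral $J_{\tau}^{1}$ to both sides of $u^{T}y\ge D_{t}^{\beta}V$. Using the semigroup property of fractional integrals to commute $J_{\tau}^{1}$ with $J_{t}^{1-\beta}$ and using $J_{\tau}^{1}D_{t}^{1}V=V(x,\tau)-V(x,0)$ exactly as in the proof of the $L_2$-stability theorem, the right-hand side becomes $J_{t}^{1-\beta}\bigl(V(x,\tau)-V(x,0)\bigr)$; with $V(x,0)=0$ and $V\ge 0$ this is a fractional integral of a nonnegative function, hence nonnegative, so $J_{\tau}^{1}u^{T}y\ge 0$.

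For the asserted direction, \eqref{newpassivity} $\Rightarrow$ passivity, I would construct a storage function via the available-storage functional of Willems, adapted to the fractional setting: $V_{a}(x_{0}):=\sup\{-J_{\tau}^{1}u^{T}y : \tau\ge 0,\ u(\cdot)\text{ admissible},\ x(0)=x_{0}\}$. Taking $\tau=0$ gives $V_{a}\ge 0$; \eqref{newpassivity} forces every competitor in the supremum to be $\le 0$ when $x_{0}=\underline{0}$, so $V_{a}(\underline{0})=0$; and finiteness for $x_{0}\ne\underline{0}$ should follow from the local-Lipschitz and continuity hypotheses on $f$ together with \eqref{newpassivity}. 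It then remains to verify the fractional dissipation inequality $D_{t}^{\beta}V_{a}\le u^{T}y$ along trajectories, which is the crux.

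The main obstacle is precisely this last step, because $D_{t}^{\beta}$ is nonlocal and the integer-order telescoping ($\int_{t_{0}}^{t_{1}}u^{T}y\ge V_{a}(x(t_{1}))-V_{a}(x(t_{0}))$ collapsing to $\tfrac{d}{dt}V_{a}\le u^{T}y$) does not transfer verbatim. I would first derive the \emph{integrated} fractional inequality $J_{t}^{1-\beta}\!\left(V_{a}(x(\cdot))\right)\!(t)\le J_{t}^{1}u^{T}y$ for all $t$ (the natural fractional analogue of the telescoped bound, obtained by reversing the computation in the converse direction), and then try to recover the pointwise inequality either by applying $D_{t}^{1-\beta}$ and controlling the resulting singular-kernel integral, or by passing to an auxiliary storage function such as $D_{t}^{1-\beta}V_{a}$ whose Caputo $\beta$-derivative equals $\tfrac{d}{dt}V_{a}\le u^{T}y$ by construction. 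Proving that this auxiliary function is still nonnegative is where the real difficulty sits: fractional integrals and derivatives do not preserve sign in general, so one must exploit that $V_{a}$ is a supremum of integrals of $u^{T}y$ rather than an arbitrary nonnegative function, and very possibly strengthen the hypotheses (zero initial condition, a reachability-type assumption) to close the argument.
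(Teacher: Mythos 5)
Your proof of the direction (passivity $\Rightarrow$ \eqref{newpassivity}) is essentially the paper's proof: the paper likewise applies $J_{\tau}^{1}$ to $u^{T}y\ge J_{t}^{1-\beta}D_{t}^{1}V$, commutes the integrals to get $J_{\tau}^{1}u^{T}y\ge J_{t}^{1-\beta}\bigl(V(x,\tau)-V(x,0)\bigr)\ge -\tfrac{t^{1-\beta}}{(1-\beta)\Gamma(1-\beta)}V(x,0)$, and then, instead of assuming $x(0)=\underline{0}$ as you do, kills the $V(x,0)$ term by noting the inequality must hold for all $t\in[0,\tau]$ and evaluating at $t=0$. Up to that cosmetic difference you and the paper coincide on this implication.

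The substantive point is that this is the \emph{only} implication the paper proves. Despite the wording ``the system is passive if $J_{\tau}^{1}u^{T}y\ge 0$'' and the surrounding claim of an ``equivalent definition,'' the paper's proof establishes only that passivity implies the integral condition; the converse --- which you correctly identify as the literal content of the statement and attack via the Willems available-storage functional --- is nowhere carried out in the paper. So your incomplete sketch of that half is not a gap relative to what the paper actually demonstrates, and the obstruction you name (the nonlocality of $D_{t}^{\beta}$ prevents the integer-order telescoping from collapsing to a pointwise dissipation inequality, and fractional integration does not preserve sign) is a fair criticism of the proposition as stated rather than a defect of your argument. If your goal is to reproduce the paper, keep only your ``converse'' paragraph; if your goal is the literal statement, that direction remains open for you exactly as it is open in the paper.
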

\begin{proof}
According to \eqref{passivity}, we can write
\begin{equation}
u^{T}y\geq J_{t}^{1-\beta}D_{t}^{1}V(x,t).
\end{equation}
Taking integer order integral on both sides yields
\begin{equation}
J_{\tau}^{1}u^{T}y\geq J_{t}^{1-\beta}J_{\tau}^{1}D_{t}^{1}V(x,t).
\end{equation}
Equivalently, we derive
\begin{equation}
J_{\tau}^{1}u^{T}y\geq J_{t}^{1-\beta}(V(x,\tau)-V(x,0)).
\label{inequ1}
\end{equation}
Then, since $V(x,t)\geq 0, \forall t\geq 0$, \eqref{inequ1} can be written as
\begin{equation}
J_{\tau}^{1}u^{T}y\geq -\frac{t ^{1-\beta}}{(1-\beta)\Gamma (1-\beta)}V(x,0),\quad \forall 0 \leq t \leq \tau.
\label{inequ2}
\end{equation}
Since for every $\tau$, \eqref{inequ2} should holds for all $t \in [0,\tau ]$, it should also hold for $t=0$, which means
\begin{equation}
J_{\tau}^{1}u^{T}y\geq 0.
\end{equation}
\end{proof}
\begin{remark}
For the case $\beta \rightarrow 1$, \eqref{inequ2} becomes
\begin{equation}
J_{\tau}^{1}u^{T}y\geq -V(x,0),
\end{equation}
which is the definition of passivity in integer order systems.
\end{remark}
\begin{theorem}
Assume two passive fractional order systems $S_1$ and $S_2$ with passivity orders of $\beta _{1}$ and $\beta _{2}$, respectively. Furthermore, $V_{1}(x_{1},0)=V_{2}(x_{2},0)=0$. Then, the overall system with the parallel and feedback interconnection is passive.
\end{theorem}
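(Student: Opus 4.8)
The plan is to route everything through the order-free integral characterization of passivity in \eqref{newpassivity}, which is exactly the device that lets systems of different passivity orders be combined. The first step is to read off what passivity of each subsystem gives. Running the argument behind the preceding proposition on $S_i$ with its own order $\beta_i$ and storage function $V_i$, inequality \eqref{inequ2} specialized to $S_i$ reads
\begin{equation*}
J_\tau^1 u_i^T y_i \;\ge\; -\frac{t^{1-\beta_i}}{(1-\beta_i)\Gamma(1-\beta_i)}\,V_i(x_i,0), \qquad 0\le t\le \tau,\ i=1,2,
\end{equation*}
and the hypothesis $V_1(x_1,0)=V_2(x_2,0)=0$ collapses the right-hand sides to zero, so $J_\tau^1 u_i^T y_i\ge 0$ for all $\tau\ge 0$ and $i=1,2$. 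This is the crucial point at which the differing orders stop mattering: once each passivity condition is expressed as an ordinary integral inequality, the two live in the same setting and may simply be added.

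Next I would handle the two interconnections exactly as in the same-order theorem, but at the level of integrals. For the parallel interconnection $u=u_1=u_2$ and $y=y_1+y_2$, so by linearity of $J_\tau^1$ we get $J_\tau^1 u^T y = J_\tau^1 u_1^T y_1 + J_\tau^1 u_2^T y_2 \ge 0$ for every $\tau\ge 0$, and \eqref{newpassivity} then says the parallel system is passive. For the feedback interconnection, with external input $r$ and overall output taken to be $y=y_1$, substituting $u_2=y_1$ and $u_1=r-y_2$ and using the scalar identity $y_2^T y_1 = y_1^T y_2$ gives $u_1^T y_1 + u_2^T y_2 = (r-y_2)^T y_1 + y_1^T y_2 = r^T y_1 = r^T y$; integrating, $J_\tau^1 r^T y = J_\tau^1 u_1^T y_1 + J_\tau^1 u_2^T y_2 \ge 0$, and \eqref{newpassivity} again yields passivity.

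The only delicate point — and the one I would flag as the main obstacle — is the legitimacy of using \eqref{newpassivity} as a genuine equivalence: it is invoked forward (passivity $\Rightarrow$ integral inequality) on the two components and backward (integral inequality $\Rightarrow$ passivity) on the interconnection, and the backward direction does not come attached to any specific derivative order, so the conclusion is passivity in the sense of \eqref{newpassivity} rather than with a prescribed $\beta$. The assumption $V_1(x_1,0)=V_2(x_2,0)=0$ is doing essential work here: it is precisely what eliminates the order-dependent boundary offsets $\tfrac{t^{1-\beta_i}}{(1-\beta_i)\Gamma(1-\beta_i)}V_i(x_i,0)$, which otherwise would not combine across two subsystems with $\beta_1\ne\beta_2$.
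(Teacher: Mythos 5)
Your proof follows essentially the same route as the paper: both arguments pass to the integral characterization \eqref{newpassivity} for each subsystem and then simply add the two inequalities, using $u=u_1=u_2$, $y=y_1+y_2$ for the parallel case and $u_1=r-y_2$, $u_2=y_1$ with the cancellation $-y_2^Ty_1+y_1^Ty_2=0$ for the feedback case. The caveat you flag about invoking \eqref{newpassivity} in both directions is apt (the paper states that proposition as a sufficient condition for passivity but only proves the converse implication), but the paper's own proof of this theorem relies on exactly the same two-way use, so your argument is faithful to it.
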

\begin{proof}
Assume that according to \eqref{newpassivity}, for both systems $S_{1}$ and $S_2$, the passivity condition holds as follows
\begin{equation*}
J_{\tau}^{1} u_{1}^{T}y_{1} \geq 0,
\end{equation*}
\begin{equation}
J_{\tau}^{1} u_{2}^{T}y_{2} \geq 0.
\label{twosys}
\end{equation}
Therefore, for the parallel interconnection, we can write
\begin{equation}
J_{\tau}^{1} u^{T}y_{1}+J_{\tau}^{1} u^{T}y_{2}\geq 0.
\label{newparallel}
\end{equation}
Equivalently,
\begin{equation}
J_{\tau}^{1}(u^{T}y_{1}+u^{T}y_{2})\geq 0.
\end{equation}
Showing the system with the parallel interconnection is also passive.
\par
A similar approach can be used to show that the feedback interconnection of two passive fractional order systems is also passive. For the feedback interconnection, since $u_{1}=r-y_{2}$, and $u_{2}=y_{1}$, adding equations in \eqref{twosys} yields 
\begin{equation}
J^{1} (r-y_{2})^{T}y_{1}+J^{1} {y_{1}}^{T}y_{2}\geq 0.
\end{equation}
Equivalently, we write
\begin{equation}
J_{\tau}^{1} r^{T}y_{1} \geq 0,
\end{equation}
which shows the closed-loop system is passive.\par
\end{proof}
For the sake of completeness, in following we review the input feed-forward output feedback passive (IF-OFP), and the passivity indices for the fractional order systems.
\begin{definition}(IF-OFP) \cite{khalil1996noninear,sepulchre2012constructive}
System \eqref{system} is input feed-forward output feedback passive (IF-OFP), if there exist $\rho , \nu \in \Re $ such that the system is dissipative with respect to the following supply rate
\begin{equation}
w(u,y)=u^{T}y-\nu u^{T}u -\rho y^{T}y, \forall t \geq 0.
\label{IFE}
\end{equation}
\label{IF}
\end{definition}
To describe how much a system is passive, passivity indices are defined. \par
\begin{definition}(Passivity indices) \cite{khalil1996noninear,sepulchre2012constructive}
According to the \textit{Definition} \ref{IF}, a system is
\begin{itemize}
\item input feed-forward (strictly) passive (IFP), if the system \eqref{system} is dissipative with $\rho =0$, and $\nu \neq 0$ ($\nu >0$) in \eqref{IFE},
\item output feedback (strictly) passive (OFP), if the system \eqref{system} is dissipative with $\rho \neq 0$ ($\rho > 0$), and $\nu = 0$ ($\nu >0$) in \eqref{IFE}, and
\item very strictly passive (VSP), if the system \eqref{system} is dissipative with $\rho > 0$, and $\nu > 0$ in \eqref{IFE}.
\end{itemize}
\end{definition}
\section{Feedback passivation}
\label{passivationsec}
In this section, the problem of feedback passivation is studied for linear fractional order systems. In this problem, we want to design a controller such that the closed-loop system is passive for all the external inputs.\par
Consider the following linear fractional order system
\begin{equation}
\begin{aligned}
&D_{t}^{\gamma}x=Ax+Bu,\\&y=Cx+Du.
\end{aligned}
\label{linear}
\end{equation}
The objective is to design a controller with the following form
\begin{equation}
u=Fx+Gv,
\label{controller}
\end{equation}
such that the closed-loop system is passive for all $v$, where $v$ is an external input.\par
In following, we first review a lemma which is useful to prevent from using the chain rule of derivatives of fractional order. Then, we will propose the feedback passivation theorem for linear fractional order systems.
\begin{lemma}\cite{zhao2016state}
For all time instants $t$, there exist a positive definite matrix $P$ such that the following inequality holds
\begin{equation}
\frac{1}{2}D_{t}^{\beta}(x^{T}Px)\leq x^{T}PD_{t}^{\beta}x.
\label{lem}
\end{equation}
\end{lemma}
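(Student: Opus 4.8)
The plan is to reduce the claimed inequality to its scalar prototype
\[
\tfrac12 D_{t}^{\beta}\bigl(x^{2}(t)\bigr)\le x(t)\,D_{t}^{\beta}x(t)
\]
for a scalar (sufficiently smooth) function $x$, and then reassemble. For the reduction I would use that a symmetric positive definite $P$ factors as $P=M^{T}\Lambda M$ with $M$ orthogonal and $\Lambda=\diag(\lambda_{1},\dots,\lambda_{n})$, $\lambda_{i}>0$. Setting $z(t)=Mx(t)$ and noting that the constant matrix $M$ commutes with $D_{t}^{\beta}$ (so $D_{t}^{\beta}z=M D_{t}^{\beta}x$), one has $x^{T}Px=\sum_{i}\lambda_{i}z_{i}^{2}$ and $x^{T}PD_{t}^{\beta}x=\sum_{i}\lambda_{i}z_{i}D_{t}^{\beta}z_{i}$; applying the scalar inequality coordinatewise and weighting by $\lambda_{i}>0$ before summing gives $\tfrac12 D_{t}^{\beta}(x^{T}Px)=\tfrac12\sum_{i}\lambda_{i}D_{t}^{\beta}(z_{i}^{2})\le\sum_{i}\lambda_{i}z_{i}D_{t}^{\beta}z_{i}=x^{T}PD_{t}^{\beta}x$, which is \eqref{lem}.

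So the real content is the scalar inequality. Fixing $t$, I would set $g(\tau):=x(\tau)-x(t)$, so that $g(t)=0$, $\dot g=\dot x$, and $g\dot g=\tfrac12\frac{d}{d\tau}g^{2}$. Writing out the two Caputo derivatives from the definition and subtracting, the difference collapses to a single integral,
\[
x(t)D_{t}^{\beta}x(t)-\tfrac12 D_{t}^{\beta}\bigl(x^{2}(t)\bigr)=-\frac{1}{\Gamma(1-\beta)}\int_{0}^{t}\frac{g(\tau)\dot g(\tau)}{(t-\tau)^{\beta}}\,d\tau .
\]
Integrating by parts with $u=(t-\tau)^{-\beta}$ and $dv=\tfrac12\frac{d}{d\tau}g^{2}(\tau)\,d\tau$, the boundary term at the singular endpoint $\tau=t$ vanishes because $g^{2}(\tau)=O\bigl((t-\tau)^{2}\bigr)$ while $2-\beta>0$, and what remains is manifestly nonnegative:
\[
x(t)D_{t}^{\beta}x(t)-\tfrac12 D_{t}^{\beta}\bigl(x^{2}(t)\bigr)=\frac{1}{\Gamma(1-\beta)}\left(\frac{g^{2}(0)}{2\,t^{\beta}}+\frac{\beta}{2}\int_{0}^{t}\frac{g^{2}(\tau)}{(t-\tau)^{\beta+1}}\,d\tau\right)\ge 0 ,
\]
since $\Gamma(1-\beta)>0$ for $\beta\in(0,1)$ and every term in parentheses is nonnegative. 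This proves the scalar inequality, and hence the lemma.

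I expect the only genuine obstacle to be justifying this integration by parts at $\tau=t$: one must know that $x$ is regular enough (for example continuously differentiable, or absolutely continuous with suitable behavior near $t$) for the improper integrals to converge and for the boundary contribution to disappear — but that is precisely the regularity already presupposed when $D_{t}^{\beta}x$ is written in the Caputo form. A minor point worth recording is that \eqref{lem} should be read with $P$ symmetric; for a general $P$ one replaces it by $\tfrac12(P+P^{T})$, which changes neither side, and the same argument applies. Finally, as $\beta\to1$ the two extra nonnegative terms are suppressed by the factor $1/\Gamma(1-\beta)\to0$, and the inequality degenerates to the familiar identity $\tfrac12\frac{d}{dt}(x^{T}Px)=x^{T}P\dot x$.
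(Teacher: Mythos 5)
Your proof is correct, and it is worth noting that the paper itself offers no proof of this lemma at all --- it is imported by citation from \cite{zhao2016state} --- so there is nothing internal to compare against. Your argument (orthogonal diagonalization of $P$ to reduce to the scalar case, then the substitution $g(\tau)=x(\tau)-x(t)$ and an integration by parts that turns the deficit $x\,D_t^{\beta}x-\tfrac12 D_t^{\beta}(x^{2})$ into the manifestly nonnegative quantity $\frac{1}{\Gamma(1-\beta)}\bigl(\frac{g^{2}(0)}{2t^{\beta}}+\frac{\beta}{2}\int_{0}^{t}\frac{g^{2}(\tau)}{(t-\tau)^{\beta+1}}\,d\tau\bigr)$) is exactly the standard one in that literature, and the regularity you flag ($x$ of class $C^{1}$ so that $g^{2}(\tau)=O((t-\tau)^{2})$ kills the boundary term and makes the singular integral converge) is the same hypothesis under which the Caputo derivative is written, so the argument is complete. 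Two of your side observations actually improve on the statement as printed: the inequality holds for \emph{every} symmetric positive definite $P$, not merely for some $P$ as the lemma's ``there exist'' phrasing suggests (and the stronger version is what Theorem~\ref{thepassivation} really uses, since there $P$ is dictated by the LMI rather than chosen to make the lemma true); and the symmetrization remark disposes of the case of non-symmetric $P$.
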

The following theorem proposes a control design approach for the feedback passivation problem.
\begin{theorem}(Feedback passivation)
Consider the linear system \eqref{linear}, and the controller \eqref{controller}. The closed-loop system is passive with the storage function $V(x)=\frac{1}{2}x^{T}Px$, if there exist a matrix $X=P^{-1}$ and a matrix $Q=FX$ with proper dimensions, such that

\begin{equation}
\begin{bmatrix}
    (AX+XA^{T}+BQ+Q^{T}B^{T})       & (BG-XC^{T}-Q^{T}D^{T}) \\
   (BG-XC^{T}-Q^{T}D^{T})^{T}       & (-DG-G^{T}D^{T})
\end{bmatrix}
\leq 0.
\end{equation}
Then, using $F=QX^{-1}$, the controller can be designed.
\label{thepassivation}
\end{theorem}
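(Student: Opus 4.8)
The plan is to close the loop, reduce passivity to a static quadratic-form inequality in $(x,v)$ using the Lemma, and then turn the resulting bilinear matrix inequality into the stated LMI by a congruence transformation. Substituting the controller \eqref{controller} into \eqref{linear} gives the closed-loop dynamics $D_t^{\gamma}x=(A+BF)x+BGv$ with output $y=(C+DF)x+DGv$, whose external input is $v$. Since $P>0$, the candidate $V(x)=\tfrac12 x^{T}Px$ is a legitimate (positive semidefinite) storage function, so by the definition of passivity it suffices to establish $v^{T}y\ge D_t^{\beta}V(x)$ for all $v$ and $x$.

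Because no chain or product rule is available for $D_t^{\beta}(x^{T}Px)$, I would invoke \eqref{lem} to write $D_t^{\beta}V(x)=\tfrac12 D_t^{\beta}(x^{T}Px)\le x^{T}PD_t^{\beta}x = x^{T}P[(A+BF)x+BGv]$. Hence it is enough to prove the pointwise inequality $v^{T}y - x^{T}P[(A+BF)x+BGv]\ge 0$. Substituting $y=(C+DF)x+DGv$, using $v^{T}(C+DF)x=x^{T}(C+DF)^{T}v$, and symmetrizing each term, this is exactly
\begin{equation*}
\begin{bmatrix} x \\ v \end{bmatrix}^{T}
\begin{bmatrix}
 P(A+BF)+(A+BF)^{T}P & PBG-(C+DF)^{T} \\
 (PBG-(C+DF)^{T})^{T} & -(DG+G^{T}D^{T})
\end{bmatrix}
\begin{bmatrix} x \\ v \end{bmatrix}\le 0 ,
\end{equation*}
so the closed loop is passive with storage function $V$ whenever this $2\times 2$ block matrix is negative semidefinite.

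The matrix inequality above is bilinear in $(P,F)$ through the product $PBF$, so it is not yet an LMI. I would pre- and post-multiply it by the invertible symmetric matrix $\diag(P^{-1},I)$, which is a congruence and hence preserves the sign, and set $X=P^{-1}$ and $Q=FX$. A short computation shows the $(1,1)$ block becomes $AX+XA^{T}+BQ+Q^{T}B^{T}$, the $(1,2)$ block becomes $BG-XC^{T}-Q^{T}D^{T}$, and the $(2,2)$ block is unchanged, which is precisely the LMI in the statement. The transformation is reversible (since $P=X^{-1}$ exists and is symmetric), so feasibility of the LMI together with $F=QX^{-1}$ recovers the block matrix inequality, and therefore the passivity inequality.

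The only genuinely fractional-order ingredient is the step that replaces $D_t^{\beta}V$ by $x^{T}PD_t^{\beta}x$; securing that bound is exactly what the cited Lemma supplies, and it is the reason the theorem yields a sufficient (not necessary) condition. Everything after that point is the standard integer-order feedback-passivation argument, and the congruence and change of variables are routine bookkeeping; the one place to be careful is symmetrizing the off-diagonal and the $DG$ terms correctly, so that the block matrix one ends up with matches the one in the statement.
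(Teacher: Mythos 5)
Your proposal is correct and follows essentially the same route as the paper's proof: close the loop, bound $D_t^{\beta}(\tfrac12 x^{T}Px)$ by $x^{T}PD_t^{\beta}x$ via the cited lemma, symmetrize into a quadratic form in $(x,v)$, and apply the congruence $\diag(P^{-1},I)$ with the change of variables $X=P^{-1}$, $Q=FX$. Your framing of the sufficiency direction (passivity holds \emph{whenever} the block matrix is negative semidefinite) is in fact slightly cleaner than the paper's wording.
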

\begin{proof}
Consider the system \eqref{linear} and the controller \eqref{controller}, we can write the closed-loop system as
\begin{equation}
\begin{aligned}
&D_{t}^{\gamma}x=(A+BF)x+BGv,\\&y=(C+DF)x+DGv.
\end{aligned}
\end{equation}
Recalling the passivity definition \eqref{passivity}, for the closed-loop system, we can write the passivity condition to be
\begin{equation}
D_{t}^{\beta}V\leq v^{T}(C+DF)x+v^{T}DGv.
\end{equation}
Assume that the storage function candidate is $V=\frac{1}{2}x^{T}Px$, and $\beta=\gamma$. Then using~\eqref{lem}, if the following equation holds
\begin{equation}
D_{t}^{\beta}(\frac{1}{2}x^{T}Px)\leq x^{T}PD_{t}^{\beta}x=x^{T}P((A+BF)x+BGv)\leq v^{T}(C+DF)x+v^{T}DGv,
\end{equation}
the closed-loop system is passive. Equivalently, we write
\begin{equation}
x^{T}P(A+BF)x+x^{T}PBGv-v^{T}(C+DF)x-v^{T}DGv \leq 0.
\end{equation}
Now, since each term is scalar, we rewrite it as
\begin{equation}
\begin{aligned}
 \frac{1}{2} & (x^{T}PAx+x^{T}A^{T}Px+x^{T}PBFx+x^{T}F^{T}B^{T}Px+x^{T}PBGv+v^{T}G^{T}B^{T}Px \\
& -v^{T}Cx-x^{T}C^{T}v-v^{T}DFx-x^{T}F^{T}D^{T}v-v^{T}DGv-v^{T}G^{T}D^{T}v) \leq 0.
\end{aligned}
\end{equation}
Then, it yields
\begin{equation}
\begin{bmatrix}
x^{T} & v^{T}
\end{bmatrix}
\begin{bmatrix}
\frac{1}{2}(PA+A^{T}P+PBF+F^{T}B^{T}P)&\frac{1}{2}(PBG-C^{T}-F^{T}D^{T})\\
\frac{1}{2}(PBG-C^{T}-F^{T}D^{T})^{T}&\frac{1}{2}(-DG-G^{T}D^{T})
\end{bmatrix}
\begin{bmatrix}
x\\
v
\end{bmatrix}\leq 0,
\end{equation}
which implies that
\begin{equation}
\begin{bmatrix}
\frac{1}{2}(PA+A^{T}P+PBF+F^{T}B^{T}P)&\frac{1}{2}(PBG-C^{T}-F^{T}D^{T})\\
\frac{1}{2}(PBG-C^{T}-F^{T}D^{T})^{T}&\frac{1}{2}(-DG-G^{T}D^{T})
\end{bmatrix}\leq 0.
\label{firstineq}
\end{equation}
Multiplying both sides of the matrix \eqref{firstineq} by 
\begin{equation*}
\begin{bmatrix}
P^{-1}&0\\
0&I
\end{bmatrix}
\end{equation*}
where $I$ is the identity matrix with proper dimensions, yields
\begin{equation}
\begin{bmatrix}
\frac{1}{2}(AP^{-1}+P^{-1}A^{T}+BFP^{-1}+P^{-1}F^{T}B^{T})&\frac{1}{2}(BG-P^{-1}C^{T}-P^{-1}F^{T}D^{T})\\
\frac{1}{2}(BG-P^{-1}C^{T}-P^{-1}F^{T}D^{T})^{T}&\frac{1}{2}(-DG-G^{T}D^{T})
\end{bmatrix}\leq 0.
\label{secondineq}
\end{equation}
Defining $P^{-1}=X$, and $FX=Q$, and multiplying both sides of inequality by $2$ complete the proof.
\end{proof}
\section{Examples}
\label{simulation}
In this section, we present some examples to illustrate the proposed concepts. Furthermore, a simulation is presented for the feedback passivation problem.\par
\textbf{Example 1.} In this example, we want to check the passivity of the following system ($0<\beta <1$)
\begin{equation}
\begin{aligned}
S_{1}: &D_{t}^{\beta} z=u_{1},\\&y_{1}=z.
\end{aligned}
\label{ex1}
\end{equation}
Consider the storage function candidate $V_{1}(z)=\frac{1}{2}z^{2}$, then according to \eqref{passivity}, and \eqref{lem}, first we calculate $D_{t}^{\beta} V_{1}(z)$,
\begin{equation}
D_{t}^{\beta} V_{1}(z)=D_{t}^{\beta} (\frac{1}{2}z^2)\leq zD_{t}^{\beta} z=zu_{1}
\end{equation}
On the other hand, according to \eqref{ex1}, $u^{T}y_{1}=uz$. Therefore, \eqref{passivity} hold and the system \eqref{ex1} is passive with order of $\beta$.\par
\textbf{Example 2.} In this example, we are interested to check the passivity of the following system
\begin{equation*}
\begin{cases}
D_{t}^{\beta} x_{1}=x_{2}\\
D_{t}^{\beta} x_{2}=-ax_{1}-kx_{2}+u_{2}
\end{cases}
\end{equation*}
and,
\begin{equation}
y_{2}=x_{2}, \quad \forall a,k>0
\label{ex2}
\end{equation}
Consider the storage function $V_{2}(x)=\frac{1}{2}ax_{1}^{2}+\frac{1}{2}x_{2}^{2}$. Then, according to \eqref{passivity}, and \eqref{lem}, 
\begin{equation}
\begin{aligned}
D_{t}^{\beta} V_{2}(x)& =D_{t}^{\beta} (\frac{1}{2}ax_{1}^{2}+\frac{1}{2}x_{2}^{2})\leq ax_{1}D_{t}^{\beta}x_{1}+x_{2}D_{t}^{\beta}x_{2}\\
&=ax_{1}x_{2}+x_{2}(-ax_{1}-kx_{2}+u_{2})\\
&=-kx_{2}^{2}+x_{2}u_{2}
\end{aligned}
\end{equation}
which is less that $u^{T}y_{2}=ux_{2}$. Therefore, the system \eqref{ex2} is passive with order of $\beta$.\par 
\textbf{Example 3.} In this example, we study the compositionality of passive systems for the parallel and feedback interconnections. Consider two passive systems of \eqref{ex1}, and \eqref{ex2}.\\
\textit{Parallel interconnection.} In this case $y=y_{1}+y_{2}=z+x_{2}$, and $u=u_{1}=u_{2}$. Then, using the storage function candidate $V(x,z)=V_{1}(z)+V_{2}(x)=\frac{1}{2}ax_{1}^{2}+\frac{1}{2}x_{2}^{2}+\frac{1}{2}z^{2}$, we derive
\begin{equation}
\begin{aligned}
D_{t}^{\beta} V(x,z)=D_{t}^{\beta} (V_{1}(z)+V_{2}(z))& \leq ax_{1}D_{t}^{\beta}x_{1}+x_{2}D_{t}^{\beta}x_{2}+zD_{t}^{\beta}z\\
&=ax_{1}x_{2}+x_{2}(-ax_{1}-kx_{2}+u)+zu\\
&=-kx_{2}^{2}+x_{2}u+zu,
\end{aligned}
\end{equation}
which shows that considering $k>0$, and $u^{T}y=u(x_{2}+z)=ux_{2}+uz$, the overall system satisfy the passivity condition \eqref{passivity}. Therefore, the parallel interconnected system is also passive.\\
\textit{Feedback interconnection.} In this case, as denoted in Fig. \ref{feedback}, we know that $y=y_{1}=z=u_{2}$. Then, for the storage function $V(x,z)=V_{1}(x)+V_{2}(z)$,
\begin{equation}
\begin{aligned}
D_{t}^{\beta} V(x,z)& \leq ax_{1}D_{t}^{\beta}x_{1}+x_{2}D_{t}^{\beta}x_{2}+zD_{t}^{\beta}z\\
&=ax_{1}(x_{2})+x_{2}(-ax_{1}-kx_{2}+u_{2})+zu_{1}\\
&=-kx_{2}^{2}+x_{2}y+z(r-y_{2})\\
&=-kx_{2}^{2}+zr
\end{aligned}
\end{equation}
which satisfies the \eqref{passivity} for $u^{T}y=rz$. Therefore, the closed-loop system is also passive with order of $\beta$.\\
\textbf{Example 4.} In this example, the objective is to find a feedback controller of the form \eqref{controller} to passivate a  system which does not satisfy the passivity condition. Consider the following system,
\begin{equation*}
\begin{cases}
D_{t}^{\beta} x_{1}=x_{2}\\
D_{t}^{\beta} x_{2}=-ax_{1}+kx_{2}+u,
\end{cases}
\end{equation*}
and
\begin{equation}
y=x_{2}+u, \quad \forall a,k>0.
\label{ex4}
\end{equation}
It is easy to show that the passivity constraint \eqref{passivity} does not hold for all arbitrary $k>0$. Therefore, using the theorem \ref{thepassivation}, and solving the LMI to find  a feasible solution for the matrix $P$ in the storage function candidate $V(x)=\frac{1}{2}x^{T}Px$, we get 
\begin{equation}
P=\begin{bmatrix}
0.1617&0.0578\\
0.0578&0.0996
\end{bmatrix}.
\end{equation}
Therefore, the storage function $V(x)=0.1617x_{1}^{2}+0.1156x_{1}x_{2}+0.0996x_{2}^{2}$ satisfies the passivity constraint \eqref{passivity}. In this case, $F=\begin{bmatrix}
-0.5108&-0.3430
\end{bmatrix}$, and $G=5.6432$ are the designed feedback parameters to passivate the system \eqref{ex4}.

\section{Conslusion}
\label{conslusion}
In this paper, we presented the definitions of passivity, and dissipativity for the fractional order systems. Then, the properties of compositionality, and stability for the passive, and dissipative fractional order systems are proposed. It is shown that the free system of a passive fractional order system is Lyapunov stable.  Furthermore, the parallel and feedback interconnection of the passive systems preserve the passivity property. Moreover, we showed that QSR dissipative fractional order systems have the $L_2$ stability property. Then, we studied the feedback passivation problem in order to passivate a fractional order system through designing a feedback controller. Finally, we illustrated the concepts and theorems of the paper using presenting some examples.

\bibliographystyle{siamplain}
\bibliography{passivityref}
\end{document}